\newcounter{theoremcpt}
\declaretheorem[name=Theorem,numberlike=theoremcpt]{theorem}
\newenvironment{proof}{{\bf Proof}:\ }%
   {~\ \hfill $\Box$\vspace{0,5cm}}
\newenvironment{ack}{\vskip5mm{\bf Acknowledgements:}}%
\newtheorem{conj}{Conjecture}[theoremcpt]
\newtheorem{claim}{Claim}
\newtheorem{coro}[theoremcpt]{Corollary}
\newcounter{claimish}[theorem]
\numberwithin{equation}{section}
\begin{document}
\title{The bondage number of chordal graphs}

\author{V. Bouquet\footnotemark[1]}
\date{\today}

\def\thefootnote{\fnsymbol{footnote}}

\footnotetext[1]{ \noindent
Conservatoire National des Arts et M\'etiers, CEDRIC laboratory, Paris (France). Email: {\tt
valentin.bouquet@cnam.fr}}

\maketitle

\begin{abstract}
   A set $S\subseteq V(G)$ of a graph $G$ is a dominating set if each vertex has a neighbor in $S$ or belongs to $S$. Let $\gamma(G)$ be the cardinality of a minimum dominating set in $G$. The bondage number $b(G)$ of a graph $G$ is the smallest cardinality of a set edges $A\subseteq E(G)$ such that $\gamma(G-A)=\gamma(G)+1$. A chordal graph is a graph with no induced cycle of length four or more.

   In this paper, we prove that the bondage number of a chordal graph $G$ is at most the order of its maximum clique, that is, $b(G)\leq \omega(G)$. We show that this bound is best possible.

\vspace{0.2cm}
\noindent{\textbf{Keywords}\/}: Bondage number, domination, chordal graphs, maximum clique.
\end{abstract}

\section{Introduction}\label{intro}

Given a graph $G=(V,E)$, a set $S\subseteq V$ is called a \textit{dominating set} if every vertex $v\in V$ is an element of $S$ or is adjacent to an element of $S$. The minimum cardinality of a dominating set in $G$ is called the \textit{domination number} and is denoted by $\gamma(G)$. A dominating set $S\subseteq V$, with $\vert S\vert=\gamma(G)$, is called a \textit{minimum dominating set}. For an overview of the topics in graph domination, we refer to the book of Haynes et al.\ \cite{Haynes}. The \textit{bondage number} has been introduced by Fink et al.\ in \cite{Fink} has a parameter to measure the criticality of a graph with respect to the domination number. The bondage number $b(G)$ of a graph $G$ is the minimum number of edges whose removal from $G$ increases the domination number, that is, with $E'\subseteq E(G)$ such that $\gamma(G-E') = \gamma(G)+1$. To this date, the bondage number and related properties have been extensively studied. We refer to the survey of Xu \cite{Xu} for an extending overview of the bondage number and its related properties. One result we would like to highlight is a tight upper bound on the bondage number of trees. It has been discovered independently by Bauer et al.\ in \cite{Bauer} and Fink et al.\ in \cite{Fink}.

\begin{theorem}[\cite{Bauer}\cite{Fink}]\label{BondTree}
   If $G$ is a tree, then $b(G)\leq 2$.
\end{theorem}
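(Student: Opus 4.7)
The plan is to pick a longest path $v_0 v_1 \cdots v_k$ in $G$ and, based on the local structure at $v_1$ (the neighbor of the leaf $v_0$), exhibit at most two edges whose removal provably forces the domination number to increase by one.

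First I would dispose of the small-diameter trees ($K_1$, $P_2$, and the stars $K_{1,m}$) by direct inspection: in each of these a single edge deletion already increases $\gamma$, so $b(G) = 1 \leq 2$. I may therefore assume $k \geq 3$, which by the maximality of $P$ forces $v_0$ to be a leaf and every neighbor of $v_1$ other than $v_2$ to be a leaf as well.

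Next I would split into two cases. In case (A), $v_1$ has a second leaf neighbor $u \neq v_0$, and I remove $E' = \{v_0 v_1,\, u v_1\}$. In case (B), $\deg(v_1) = 2$, and I remove $E' = \{v_0 v_1,\, v_1 v_2\}$. In both cases $E'$ creates two isolated vertices in $G - E'$, so these vertices must lie in every minimum dominating set $D'$ of $G - E'$. From $D'$ I would then construct a dominating set of $G$ of size $|D'| - 1$: in case (A) by swapping the pair $\{v_0, u\} \subseteq D'$ for the single vertex $v_1$ (which in $G$ dominates both leaves as well as $v_2$), and in case (B) by simply deleting $v_0$ from $D'$ (its unique neighbor $v_1$ is already in $D'$ and dominates it in $G$). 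This yields $\gamma(G) \leq |D'| - 1 = \gamma(G - E') - 1$, and hence $b(G) \leq 2$.

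The delicate point is the swap step: I must verify that removing the forced vertices from $D'$ leaves no vertex of $G$ undominated. This uses the observation that, being isolated in $G - E'$, those vertices were dominating only themselves there, so the only vertices of $G$ whose domination depended on them are themselves, and the replacement vertex $v_1$ covers them in $G$. Once this is confirmed, both cases succeed in exactly the same way.
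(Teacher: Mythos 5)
Your proof is correct. Note that the paper does not prove this statement at all---it is quoted from Bauer et al.\ and Fink et al.\ as Theorem~\ref{BondTree}---so there is no internal proof to compare against; your argument is essentially the classical one from those sources. The case analysis at the support vertex $v_1$ of a longest path is exhaustive (every neighbor of $v_1$ other than $v_2$ is a leaf by maximality, so either a second leaf neighbor exists or $\deg(v_1)=2$), and in both cases the reconstruction of a dominating set of $G$ of size $\gamma(G-E')-1$ from a minimum dominating set of $G-E'$ is sound: the forced vertices are isolated in $G-E'$, hence dominate nothing but themselves there, and the replacement vertex $v_1$ (resp.\ the retained vertex $v_1$) covers them in $G$. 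Two cosmetic remarks only: for $K_1$ there is no edge to delete, so that degenerate case should be excluded rather than ``inspected''; and since the paper defines $b(G)$ via $\gamma(G-A)=\gamma(G)+1$ exactly, one should add the standard observation that deleting a single edge raises $\gamma$ by at most one, so an edge set that increases $\gamma$ contains a subset increasing it by exactly one. Neither affects the validity of the bound. An alternative route, more in the spirit of the machinery the paper actually uses for its main theorem, would be to derive the tree bound from Theorem~\ref{ubound}-type degree estimates at the end of a longest path, but your direct two-case construction is self-contained and arguably cleaner.
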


We would also like to point out an upper bound on the bondage number of block graphs. The \textit{block graphs} are the chordal diamond-free graphs (a diamond is a clique of order four minus an edge). The following upper bound on block graphs has been shown by Teschner in \cite{Teschner}.

\begin{theorem}[\cite{Teschner}]\label{BlockGraph}
   If $G$ is a block graph, then $b(G)\leq \Delta(G)$.
\end{theorem}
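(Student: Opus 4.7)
The plan is to construct a set of at most $\omega(G)$ edges whose removal increases the domination number, with the edge set concentrated around a simplicial vertex. Since $G$ is chordal, fix a simplicial vertex $v$ of $G$; then $K = N_G[v]$ is a clique of size at most $\omega(G)$. A first attempt is to delete the $|K|-1$ edges incident to $v$, isolating $v$ in the resulting graph $G - A$, so that $\gamma(G - A) = 1 + \gamma(G - v)$. If $\gamma(G - v) \ge \gamma(G)$, this already gives $\gamma(G-A) > \gamma(G)$ with only $|K|-1 \le \omega(G) - 1$ edges, and we are done.

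The harder case is $\gamma(G - v) = \gamma(G) - 1$. Then there exists a minimum dominating set $D \ni v$ of $G$ such that $D \setminus \{v\}$ is a minimum dominating set of $G - v$, and in particular every vertex of $K \setminus \{v\}$ is dominated from outside $K$. In this case, I would still isolate $v$ but additionally remove one carefully chosen edge $e \in E(G - v)$, for a total of at most $|K| \le \omega(G)$ edges. The edge $e$ should be chosen so that $\gamma((G - v) - e) > \gamma(G - v)$, giving $\gamma(G - A) = 1 + \gamma((G-v) - e) > \gamma(G)$. To locate $e$ I would exploit either a second simplicial vertex of $G$ (which exists and is non-adjacent to $v$ whenever $G$ is not complete), or the ``external dominator'' edges $u\,d(u)$ joining each $u \in K \setminus \{v\}$ to its dominator $d(u) \in D \setminus \{v\} \subseteq V \setminus K$. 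The complete-graph case $G = K_n$ is immediate: isolating any vertex uses $n - 1 < \omega(K_n)$ edges.

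The main obstacle is the second case, since a priori $b(G - v)$ can exceed $1$, so a single edge $e$ whose removal raises $\gamma(G - v)$ need not exist in $G - v$ without further structural information. Getting around this likely requires either a strengthened inductive statement that tracks the location of a bondage-witness edge set near a prescribed simplicial vertex, or a direct combinatorial argument using the chordal structure (together with the fact that every vertex of $K \setminus \{v\}$ is dominated from outside $K$) to produce a single edge whose removal, combined with isolating $v$, rules out every dominating set of size $\gamma(G)$ in the modified graph.
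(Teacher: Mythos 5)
The statement you are proving is only quoted in the paper (it is Teschner's theorem on block graphs); the paper gives no direct proof of it and instead derives it as a consequence of the stronger main theorem $b(G)\leq\omega(G)$ for non-complete chordal graphs. Your proposal in effect attacks that stronger bound via a simplicial vertex, and it contains a genuine gap which you yourself name but do not close. The first case is fine: if $\gamma(G-v)\geq\gamma(G)$, isolating the simplicial vertex $v$ with $d(v)\leq\omega(G)-1$ edge deletions works. But in the case $\gamma(G-v)=\gamma(G)-1$ your plan needs a \emph{single} edge $e$ of $G-v$ with $\gamma((G-v)-e)>\gamma(G-v)$, i.e.\ it needs $b(G-v)=1$. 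There is no reason for this to hold: $G-v$ is again chordal, so the best an inductive hypothesis could give you is $b(G-v)\leq\omega(G-v)$, and spending $d(v)+\omega(G-v)$ deletions blows the budget. Neither of your two candidate sources for $e$ (a second simplicial vertex, or the ``external dominator'' edges) comes with an argument that one such edge suffices, and the auxiliary claim that every vertex of $K\setminus\{v\}$ is dominated from outside $K$ is also unjustified (a vertex of $K\setminus\{v\}$ may lie in, or be dominated by, $D\setminus\{v\}$ inside $K$). So the hard case is open in your write-up, and the ``strengthened inductive statement'' you gesture at is precisely the missing content.

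For contrast, the paper avoids induction and simplicial vertices entirely. It argues by contradiction from $b(G)>\omega(G)$, takes a BFS layering $(A_0,\dots,A_k)$ from a clique $K$, and selects a connected component $W$ of some layer (with independent, terminal fringe $F$) minimizing $|F\cup W|$ over all choices of $K$. A sequence of claims, all resting on chordality (no induced $C_{\geq4}$) and on the degree bounds of Fink et al.\ and Hartnell--Rall, forces $W$ to be $N[u]$ for a single vertex $u$ seeing all of the attachment clique $Q$, with $|Q|\leq\omega(G)-1$ and $F=\emptyset$. The bondage set is then explicit and global in flavor rather than local to a simplicial vertex: all edges at some $v\in W\setminus\{u\}$ together with the edges from another $w\in W$ to $Q\setminus N(v)$, at most $|Q|+1\leq\omega(G)$ edges in total, after which any minimum dominating set would need two vertices in $W$, contradicting the fact that $u$ alone dominates $W$. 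If you want to salvage your approach, you would need to prove the stronger localized statement you allude to; as written, the argument does not establish the theorem.
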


In this paper, we prove the following upper bound that encapsulates Theorem \ref{BondTree}, and is a stronger statement than the one of Theorem \ref{BlockGraph}.

\begin{restatable}{theorem}{BondChordal}\label{BondChordal}
   Let $G$ be a chordal graph. If $G$ is a clique, then $b(G) = \lceil \omega(G)/2 \rceil$. Else $b(G)\leq \omega(G)\leq \Delta(G)$.
\end{restatable}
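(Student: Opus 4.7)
I would handle the two cases of the theorem separately.

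\emph{The clique case.} Since $\gamma(K_n)=1$, an edge set $A\subseteq E(K_n)$ satisfies $\gamma(K_n-A)\geq 2$ if and only if every vertex of $K_n$ is incident to at least one edge of $A$; otherwise any uncovered vertex still dominates $K_n-A$. An edge set covering all $n$ vertices has size at least $\lceil n/2\rceil$, and equality is realised by a near-perfect matching, which witnesses the bound. Hence $b(K_n)=\lceil n/2\rceil$. The companion inequality $\omega(G)\leq\Delta(G)$ for non-clique $G$ is folklore: one always has $\omega(G)\leq\Delta(G)+1$, and equality would force a maximum clique to be a connected component, contradicting the non-clique hypothesis (after reducing to a suitable component).

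\emph{The main inequality $b(G)\leq\omega(G)$ for non-clique chordal $G$.} I would exploit the simplicial structure of chordal graphs via a clique tree. After reducing to the connected case (a disconnected chordal non-clique either has a non-clique component on which to work, or only clique components, in which case the clique case yields $b(G)\leq\lceil\omega(G)/2\rceil$), fix a clique tree $T$ of $G$, pick a leaf $K$ of $T$ with neighbour $K'$, and set $S=K\cap K'\neq\emptyset$. Every $v\in K\setminus S$ is simplicial with $N[v]=K$. Choose such a $v$ and let $F$ be the set of $|K|-1$ edges from $v$ to $K\setminus\{v\}$; in $G-F$ the vertex $v$ is isolated and must lie in every dominating set. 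If already $\gamma(G-F)=\gamma(G)+1$, we are done with $|F|\leq\omega(G)-1$ edges. Otherwise, the plan is to remove one further edge $e$ chosen, based on a minimum dominating set $D'$ of $G-F$, to break the dominance structure within $K\cup S$, yielding $|A|\leq|K|\leq\omega(G)$.

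\emph{Main obstacle.} The delicate step is the second subcase: one must show that a single extra edge $e$ can be selected so that $\gamma(G-F-\{e\})=\gamma(G)+1$, ruling out every alternative minimum dominating set of size $\gamma(G)$. I would expect this to follow from an exchange argument on minimum dominating sets, using simpliciality of $v$ and the separator property of $S$ supplied by the clique tree to move a dominator back into $K$; the correct choice of $e$ will then depend on where a minimum dominator sits relative to $K$, $S$, and the rest of $G$. I anticipate that the case distinction between $|K\setminus S|\geq 2$ (where two simplicial twins in $K$ make the argument easier) and $|K\setminus S|=1$ (where one must follow the clique tree further and interact with $K'$) will form the technical heart of the proof.
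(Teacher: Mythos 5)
Your treatment of the clique case is correct and matches the paper's (a minimum edge cover of $K_n$, i.e.\ a near-perfect matching, has size $\lceil n/2\rceil$ and is exactly what is needed to destroy every universal vertex), and the observation $\omega(G)\leq\Delta(G)$ for connected non-cliques is fine. The subcase $\vert K\setminus S\vert\geq 2$ of your main argument also goes through: two simplicial vertices $v,v'$ with $N[v]=N[v']=K$ give $b(G)\leq d(v)+d(v')-1-\vert N(v)\cap N(v')\vert=\vert K\vert-1\leq\omega(G)-1$ by Theorem \ref{uboundneigh}.

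However, the heart of your plan --- isolate one simplicial vertex $v$ of a leaf clique $K$ of a clique tree with $\vert K\vert-1$ edges and then finish with a single further edge, for a total of $\vert A\vert\leq\vert K\vert$ --- cannot be repaired. The extremal example $G=K_n\circ K_1$ already defeats it: there the maximal cliques are $\{u_1,\ldots,u_n\}$ and the $n$ pendant edges $\{u_i,v_i\}$, so every leaf of every clique tree is a clique of size $2$, yet $b(G)=n=\omega(G)$. Your scheme would delete the pendant edge $u_1v_1$ and one further edge, i.e.\ two edges in total, but no two edge deletions increase $\gamma$ here: after removing $u_1v_1$ and any second edge, a set of the form $\{v_1\}\cup\{u_j \mid j\neq 1\}$ (adjusted in the obvious way if the second edge is another pendant edge) still dominates. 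The point is that $\vert K\vert$ for a leaf clique can be far smaller than $\omega(G)$, and the optimal bondage set may have to be spread over many leaf cliques (in this example, all $n$ pendant edges); so the single-extra-edge exchange you defer to the ``main obstacle'' simply does not exist. The paper's proof is genuinely global: it assumes $b(G)>\omega(G)$, runs a distance layering from a clique $K$ chosen to minimize a potential $\psi(K)$, extracts a star-shaped component $W$ with centre $u$ attached to a clique separator $Q$ with $\vert Q\vert\leq\omega(G)-1$, shows that every minimum dominating set meets $W$ in at most one vertex, and then deletes at most $\vert Q\vert+1\leq\omega(G)$ edges (isolating one leaf $v$ of the star and cutting another vertex $w$ of $W$ off from $Q\setminus N(v)$) to force two vertices of $W$ into a minimum dominating set, a contradiction. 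Any clique-tree version of the proof would need an analogous global mechanism; the local leaf-clique surgery is the wrong shape.
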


\section{Preliminaries}

The graphs considered in this paper are finite and simple, that is, without directed edges or loops or parallel edges. The reader is referred to \cite{Bondy} for definitions and notations in graph theory. \medskip

Let $G=(V,E)$ be a graph with vertex set $V=V(G)$ and edge set $E=E(G)$. Let $v\in V(G)$ and $xy\in E(G)$. We say that $x$ and $y$ are the \textit{endpoints} of the edge. Let $\delta(G)$ and $\Delta(G)$ denote its \textit{minimum degree} and its \textit{maximum degree}, respectively. The \textit{degree} of $v$ in $G$ is $d_G(v)$ or simply $d(v)$ when the referred graph is obvious. If $d(v)=0$, we say that $v$ is \textit{isolated} in $G$. We denote by $d(u,v)$ the \textit{distance} between two vertices, that is, the length of a shortest path between $u$ and $v$. Note that when $uv\in E$, $d(u,v)=1$. We denote by $N_G(v)$ the \textit{open neighborhood} of a vertex $v$ in $G$, and $N_G[v]=N_G(v)\cup\{v\}$ its \textit{closed neighborhood} in $G$. When it is clear from context, we write $N(v)$ and $N[v]$. The \textit{open neighborhood} of a set $U\subseteq V$ is $N(U)=\{N(u)\setminus U \mid u\in U\}$. For a subset $U\subseteq V$, let $G[U]$ denote the \textit{subgraph} of $G$ induced by $U$ which has vertex set $U$ and edge set $\{uv\in E \mid u,v\in U\}$. We may refer to $U$ as an \textit{induced subgraph} of $G$ when it is clear from the context. If a graph $G$ has no induced subgraph isomorphic to a fixed graph $H$, we say that $G$ is $H$-free.
For $n\geq 1$, the graph $P_n=u_1-u_2-\cdots-u_n$ denotes the \textit{cordless path} or \textit{induced path} on $n$ vertices, that is, $V({P_n})=\{u_1,\ldots,u_n\}$ and $E({P_n})=\{u_iu_{i+1}\; |\; 1\leq i\leq n-1\}$. For $n\geq 3$, the graph $C_n$ denotes the \textit{cordless cycle} or \textit{induced cycle} on $n$ vertices, that is, $V({C_n})=\{u_1,\ldots,u_n\}$ and $E({C_n})=\{u_iu_{i+1}\; |\; 1\leq i\leq n-1\}\cup \{u_nu_1\}$. For $n\ge 4$, $C_n$ is called a \textit{hole}.
A set $U\subseteq V$ is called a \textit{clique} if any pairwise distinct vertices $u,v\in U$ are adjacent. We denote by $\omega(G)$ the size of a maximum clique in $G$. The graph $K_n$ is the clique with $n$ vertices. A set $U\subseteq V$ is called a \textit{stable set} or an \textit{independent set} if any pairwise distinct vertices $u,v\in U$ are non adjacent. \medskip

We recall the two following results on the upper bound of the bondage number. They will be of use to prove Theorem \ref{BondChordal} in the next section.

\begin{theorem}[Fink et al.\ \cite{Fink}]\label{ubound}
   Let $G=(V,E)$ be a graph, and $u,v\in V$ such that $d(u,v)\leq 2$. Then $b(G) \leq d(u) + d(v) - 1$.
\end{theorem}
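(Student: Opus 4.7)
My plan is to construct explicitly, for any $u,v$ with $d(u,v)\le 2$, an edge set $A\subseteq E(G)$ of size at most $d(u)+d(v)-1$ such that $\gamma(G-A)\ge \gamma(G)+1$. The construction splits into two cases according to whether $uv\in E(G)$ or $d(u,v)=2$. In both cases I will carry out the same two-step argument: first force every dominating set $D'$ of $G-A$ to contain certain prescribed vertices by isolating (or almost isolating) $u$ and $v$; then exhibit a dominating set of $G$ of size $|D'|-1$ by either discarding one of those prescribed vertices or swapping it for a common neighbor.

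If $uv\in E(G)$, I take $A$ to be the set of all edges incident with $u$ or $v$. Since $uv$ is counted in both $d(u)$ and $d(v)$, we have $|A|=d(u)+d(v)-1$. In $G-A$ both $u$ and $v$ are isolated, so any dominating set $D'$ of $G-A$ must contain both. The key observation is that $D'\setminus\{v\}$ is already a dominating set of $G$: it dominates every vertex of $V\setminus\{v\}$ as it did in $G-A\subseteq G$, and it dominates $v$ in $G$ through the edge $uv$ because $u\in D'$. Therefore $\gamma(G)\le |D'|-1=\gamma(G-A)-1$.

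If $d(u,v)=2$, pick a common neighbor $w\in N(u)\cap N(v)$ and let $A$ consist of all $d(u)$ edges at $u$ together with the $d(v)-1$ edges at $v$ other than $vw$; as $uv\notin E(G)$ these two sets are disjoint, so $|A|=d(u)+d(v)-1$. In $G-A$, the vertex $u$ is isolated and $w$ is the only neighbor of $v$, so every minimum dominating set $D'$ of $G-A$ contains $u$ and meets $\{v,w\}$. I will close the argument by a small swap: if $w\in D'$, then $D'\setminus\{u\}$ dominates $G$ (it covered everything else in $G-A$, and it covers $u$ in $G$ via $w$); otherwise $v\in D'$ and $w\notin D'$, and I replace $\{u,v\}$ by $\{w\}$, the point being that in $G$ the vertex $w$ dominates $\{u,v,w\}$ while the rest of $V$ was already dominated by $D'\setminus\{u,v\}$ inside $G-A$. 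Either way I produce a dominating set of $G$ of size $|D'|-1$, which yields $\gamma(G-A)\ge \gamma(G)+1$.

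The main subtlety is the asymmetric choice in the second case: isolating $u$ completely forces $u\in D'$ (so the swap has a vertex to drop), while keeping exactly the one edge $vw$ provides the bridge that lets $w$ legitimately absorb both $u$'s and $v$'s coverage in $G$. Removing $vw$ as well would exhaust the edge budget, and keeping any other edge at $v$ would break the ``$w$ dominates $\{u,v\}$'' step in the swap. Once this bookkeeping is organized, both cases reduce to the same one-line verification that the swapped set is a dominating set of $G$.
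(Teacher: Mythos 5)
The paper states this result as a cited theorem of Fink et al.\ and gives no proof of it, so there is no in-paper argument to compare against. Your construction and two-case exchange argument are correct (the edge counts, the forced membership of $u$ and $v$ in any dominating set of $G-A$, and both swap verifications all check out), and this is essentially the standard proof of the bound from the literature.
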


\begin{theorem}[Hartnell and Rall \cite{Hartnell}]\label{uboundneigh}
   Let $G=(V,E)$ be a graph, and $uv\in E$. Then $b(G) \leq d(u) + d(v) - 1 - \vert N(u) \cap N(v)\vert$.
\end{theorem}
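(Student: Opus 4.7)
The statement decomposes into the clique case and the chordal non-clique case, and I would treat them separately.

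For $G = K_n$, note $\gamma(K_n) = 1$. Since any vertex in $K_n$ is adjacent to every other, $\gamma(K_n - A) \geq 2$ precisely when no vertex dominates $K_n - A$, i.e., every vertex is incident to at least one edge of $A$. Thus $A$, viewed as a spanning subgraph of $K_n$, must have no isolated vertex, and the minimum number of edges with this property is $\lceil n/2 \rceil$ (a perfect matching when $n$ is even, or a matching augmented by one extra edge on the unmatched vertex when $n$ is odd). Any such $A$ also admits a dominating set of size two -- the two endpoints of a single $A$-edge together cover $V$ in $K_n - A$ -- so $b(K_n) = \lceil n/2 \rceil$.

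For chordal non-clique $G$, the inequality $\omega(G) \leq \Delta(G)$ follows (in the connected case) since a maximum clique $K$ must contain a vertex with a neighbor outside $K$, giving degree at least $\omega(G)$. The main content is $b(G) \leq \omega(G)$, and my plan is to apply Theorem \ref{uboundneigh} to a carefully chosen edge coming from the clique tree of $G$.

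Let $T$ be a clique tree of $G$; since $G$ is not a clique, $T$ has at least two nodes. Pick a leaf $K$ of $T$, let $K'$ be its unique neighbor in $T$, and let $S = K \cap K'$. Every vertex of $K \setminus S$ lies in the single maximal clique $K$, hence is simplicial in $G$. The easy case is $|K \setminus S| \geq 2$: choose distinct $u, v \in K \setminus S$, so $u, v$ are adjacent with $N(u) = K \setminus \{u\}$, $N(v) = K \setminus \{v\}$, and $N(u) \cap N(v) = K \setminus \{u,v\}$. Theorem \ref{uboundneigh} applied to $uv$ then gives
\[
b(G) \leq (|K|-1) + (|K|-1) - 1 - (|K|-2) = |K| - 1 \leq \omega(G) - 1.
\]

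The remaining case $|K \setminus S| = 1$ is where I expect the main difficulty. The leaf clique $K$ has a unique private simplicial vertex $u$, whose neighbors all lie in $S$ and can have degree much larger than $\omega(G)$; Theorem \ref{uboundneigh} applied to $uv$ with $v \in S$ only yields $b(G) \leq d(v)$, which need not be at most $\omega(G)$. To handle this I would combine several complementary ideas depending on the structure around $K$: (i) when $|S| \geq 2$, apply Theorem \ref{uboundneigh} to an edge inside $S$, since then $u$ together with large portions of $K$ and $K'$ all lie in $N(v) \cap N(w)$; (ii) when another leaf of $T$ provides a simplicial vertex within distance two of a simplicial vertex already identified, invoke Theorem \ref{ubound} (Fink); (iii) when neither suffices, construct an explicit edge set $A$ of size at most $\omega(G)$ -- typically a matching-like covering of the vertices of a maximum clique, possibly augmented by a few edges between $S$ and its private attachments -- and verify that every dominating set of $G - A$ of size $\gamma(G)$ leaves some newly cut-off vertex uncovered. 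The core technical obstacle is this last verification: one must show that the explicit $A$ simultaneously defeats every minimum dominating set of $G$, which requires carefully tracking how dominating sets can shift within the chordal structure, and in particular ensuring that the private vertex $u$ and the separator $S$ are not simultaneously "rescued" by a swap involving vertices outside $K \cup K'$.
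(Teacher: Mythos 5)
Your proposal does not address the statement it is supposed to prove. The statement is the Hartnell--Rall bound: for any graph $G$ and any edge $uv\in E(G)$, one has $b(G)\leq d(u)+d(v)-1-\vert N(u)\cap N(v)\vert$. This is a claim about arbitrary graphs and a single edge; it has nothing to do with cliques versus non-cliques, chordality, or clique trees. What you have written is instead an (incomplete) argument for Theorem \ref{BondChordal}, the paper's main result, and it explicitly \emph{invokes} Theorem \ref{uboundneigh} as a tool --- so as a proof of Theorem \ref{uboundneigh} it is vacuous, indeed circular. (The paper itself does not prove this statement either; it cites it from Hartnell and Rall and uses it as a lemma, exactly as you do.)

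For the record, the missing proof is short and direct: let $E'$ consist of all $d(u)$ edges incident with $u$ together with all edges joining $v$ to vertices of $N(v)\setminus N[u]$, so that $\vert E'\vert = d(u) + \bigl(d(v)-1-\vert N(u)\cap N(v)\vert\bigr)$. In $G-E'$ the vertex $u$ is isolated and $N_{G-E'}(v)\subseteq N(u)\cap N(v)$. If $\gamma(G-E')=\gamma(G)$ and $D$ is a minimum dominating set of $G-E'$, then $u\in D$, and the vertex dominating $v$ in $G-E'$ (namely $v$ itself or a common neighbour of $u$ and $v$) also dominates $u$ in $G$; hence $D\setminus\{u\}$ dominates $G$, contradicting the minimality of $\gamma(G)$. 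Separately, even read as a proposal for Theorem \ref{BondChordal}, your argument is not complete: the case $\vert K\setminus S\vert=1$ is only a list of candidate strategies with the decisive verification left open, whereas the paper resolves precisely this difficulty by a global argument (choosing a clique $K$ minimizing the quantity $\psi(K)$ over a distance partition and analysing the component $W$), not by local surgery around a leaf of a clique tree.
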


\section{Chordal graphs}

A \textit{chordal graph} is a graph that has no hole. Stated otherwise, every subgraph that is a cycle of length at least four has a chord. We prove our main Theorem.

\BondChordal*

\begin{proof}
   We can assume that $G$ is connected with at least two vertices. Note that $\Delta(G)\geq \omega(G)-1$ and $\Delta(G)=\omega(G)-1$ if and only if $G$ is a clique. When $G$ is an even clique, one can see that $b(G)=\omega(G)/2$ by removing a perfect matching of $G$. When $G$ is an odd clique, then one can see that $b(G)=(\omega(G)-1)/2 + 1$ by removing a perfect matching of $G$ and any edge incident to the remaining universal vertex. So when $G$ is a clique, then $b(G) = \lceil \omega(G)/2 \rceil$. Therefore we can assume that $G$ is not a clique and so $\omega(G)\leq \Delta(G)$. \\

   For the sake of contradiction, we suppose that $b(G)>\omega(G)$. Let $K$ be a clique of $G$. The \textit{partition distance} in $G$ with respect to $K$ is the partition $(A_0,\ldots,A_k)$ of $V$ such that $A_0=V(K)$ and $A_i=\{v\in V \mid v\in N(u), u\in A_{i-1}\}$, for $i=1,\ldots,k$. Note that $A_i$ is the set of vertices at distance $i$ from $K$.

   \begin{claim}\label{c1}
      Let $C\subseteq A_i$ where $i\neq 0$, be such that $G[C]$ is a connected component of $G[A_i]$, and let $Q=N(C)\cap A_{i-1}$. Then $G[Q]$ is a clique.
    \end{claim}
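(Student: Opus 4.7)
The plan is to suppose for contradiction that there exist $u,v\in Q$ with $uv\notin E(G)$, and exhibit a chordless cycle of length at least $4$ through $u$ and $v$, contradicting chordality. First observe that $i\ge 2$: if $i=1$ then $u,v\in A_0=V(K)$, which is a clique, forcing $uv\in E(G)$. Hence both $u$ and $v$ have neighbors in $A_{i-2}$.

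I will build the cycle as the union of two internally disjoint $u$-$v$ arcs. For the \emph{inner} arc, choose $u'\in N(u)\cap C$ and $v'\in N(v)\cap C$ minimizing $k:=d_{G[C]}(u',v')$, let $w_0,w_1,\ldots,w_k$ be a shortest $u'$-$v'$ path in $G[C]$, and prepend/append $u,v$ via the edges $uu'$ and $vv'$. For the \emph{outer} arc, take $R\colon u,r_1,\ldots,r_t,v$ to be a shortest $u$-$v$ path in the induced subgraph $H:=G[\{u,v\}\cup A_0\cup A_1\cup\cdots\cup A_{i-2}]$; such a path exists because each of $u,v$ can be connected by descending levels into the clique $A_0$, and $t\ge 1$ since $uv\notin E$. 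Gluing the two arcs at $u$ and $v$ produces a cycle $Z$ of length $k+t+3\ge 4$. It is simple because $u,v\in A_{i-1}$, the $w_j$'s lie in $A_i$, and the $r_b$'s lie in $A_0\cup\cdots\cup A_{i-2}$, three pairwise disjoint vertex sets.

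The heart of the argument is checking that $Z$ has no chord. Chords $w_aw_b$ with $|a-b|\ge 2$ are excluded because the $w$-path is shortest in $G[C]$. Chords internal to the outer arc (any $r_ar_b$ with $|a-b|\ge 2$, or $ur_b$ with $b\ge 2$, or $vr_a$ with $a\le t-1$) are excluded by the shortness of $R$ in $H$. A chord $uw_a$ with $a\ge 1$ would give $w_a\in N(u)\cap C$ with $d_{G[C]}(w_a,v')=k-a<k$, contradicting the minimality in the choice of $(u',v')$; symmetrically $vw_a$ with $a\le k-1$ is ruled out. The chord $uv$ is excluded by hypothesis. Finally, a cross chord $w_ar_b$ cannot exist because $w_a\in A_i$ and $r_b\in A_0\cup\cdots\cup A_{i-2}$, so their BFS-levels from $K$ differ by at least $2$ and adjacent vertices must lie in consecutive levels.

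The main obstacle I anticipate is precisely the cross chords $w_ar_b$ between the two arcs. The decisive design choice that neutralizes them is to confine the interior of $R$ to levels $\le i-2$, rather than merely demanding that $R$ avoid $C$; this forces a two-level BFS gap between the $A_i$-arc and the interior of the outer arc, killing all such chords at once and yielding the required chordless cycle of length at least $4$.
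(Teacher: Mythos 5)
Your proof is correct and follows essentially the same route as the paper: both glue an arc through $C$ (living in $A_i$) to an arc through $\{u,v\}\cup A_0\cup\cdots\cup A_{i-2}$ and use the two-level gap to rule out cross chords, producing a hole. Your write-up is in fact more explicit than the paper's about why the resulting cycle is chordless.
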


   For contradiction, suppose that $G[Q]$ is not a clique. Since $A_0$ is a clique, we can consider that $i\geq 2$. Let $u,u'\in Q$ such that $uu'\not\in E$. There is a path from $u$ to $K$ and from $u'$ to $K$ in $G[A_0\cup\ldots\cup A_{i-2}\cup \{u,u'\})]$. Therefore there is an induced path $P=u-\cdots-u'$ from $u$ to $u'$ in $G[A_0\cup\ldots\cup A_{i-2}\cup \{u,u'\})]$. Let $P'=u-\cdots-u'$ be an induced path from $u$ to $u'$ in $G[C\cup\{u,u'\}]$. Then $G[V(P)\cup V(P')]$ is an induced cycle of length at least four, a contradiction. So $G[Q]$ is a clique. This proves Claim \ref{c1}. \\

   Let $W\subseteq A_i$, where $i=0,\ldots,k$, such that $G[W]$ is a connected component of $G[A_i]$ with at least two vertices. We restrict $W$ such that $F=N(W)\cap A_{i+1}$ is either empty or an independent set of $G$, and such that $N(F)\cap A_{i+2}=\emptyset$. We choose $W$ so that $\psi(K)=\vert F\cup W\vert$ is minimum. When $W\neq V(K)$, we denote $Q=N(W)\cap A^{i-1}(K)$. Note that when $W=V(K)$, then $Q=\emptyset$.

   We show that $W$ exists such as described above. Since $G$ is not a clique, it follows that $A_{k-1},A_{k}\neq \emptyset$. If $A_{k}$ is not an independent set of $G$, then there is a connected component $C$ of $G[A_k]$ with at least two vertices. Since $\vert C\vert \geq 2$ and $N(C)\cap A_{k+1}=\emptyset$, it follows that $W$ exists. Now we can assume that $A_k$ is an independent set of $G$. Let $C$ be a connected component of $G[A_{k-1}]$ such that $N(C)\cap A_k\neq \emptyset$. If $\vert C\vert \geq 2$, then $W$ exists since $N(C)\cap A_k$ is an independent set of $G$ and $A_{k+1}=\emptyset$. Hence it remains the case where $\vert C\vert = 1$. Let $C=\{u\}$ and $v\in N(u)\cap A_k$. From Claim \ref{c1} $G[N(v)\cap A_{k-1}]$ is a clique. Thus $N(v)=\{u\}$ and $d(v)=1$. From Claim \ref{c1} $N(u)\cap A_{k-2}$ is a clique. Therefore $d(u)\leq \omega(G)$. Then from Theorem \ref{ubound} $b(G)\leq d(u) + d(v) - 1\leq \omega(G)$, a contradiction. Hence $\vert C\vert \geq 2$ and so $W$ exists. \\

   Let $K$ be a clique of $G$ such that $\psi(K)=min(\{\psi(K') \mid K'$ \textit{is a clique of} $G\})$. We consider the sets $A_0,\ldots, A_k,F,Q,W$ as described above in the partition distance with respect to $K$.

   \begin{claim}\label{c2}
      For every $u\in W$ such that $Q = N(u)\cap A_{i-1}$, the sets $W\setminus \{u\}$ and $N(u)\cap (F\cup W)$ are independent in $G$, and $W = N[u]\cap W$.
   \end{claim}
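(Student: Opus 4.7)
The plan is to prove all three conditions by contradiction, combining chordality with the minimality of $\psi(K)$. As a preliminary I would record a fact used repeatedly: every $w \in W \subseteq A_i$ has a neighbor in $A_{i-1}$ which, by definition of $Q = N(W) \cap A_{i-1}$, lies in $Q$. Since Claim~\ref{c1} gives that $Q$ is a clique, and since the hypothesis $N(u) \cap A_{i-1} = Q$ means $Q \subseteq N(u)$, the set $\{u\} \cup Q$ is a clique of $G$.

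For the domination condition $W = N[u] \cap W$, suppose for contradiction that $w \in W$ satisfies $uw \notin E$, and pick an induced path $P : u_0 - u_1 - \cdots - u_\ell$ in $G[W]$ with $u_0 = u$ and $u_\ell = w$, so $\ell \geq 2$. Let $q \in Q$ be any neighbor of $w$. A standard chordality argument forces $q$ to be adjacent to \emph{every} vertex of $P$: if $u_a$ is the first non-neighbor of $q$ along $P$ (so $a \geq 1$ because $qu \in E$) and $u_b$ the next neighbor of $q$ after $u_a$ (so $b \leq \ell$ because $qw \in E$), then $q - u_{a-1} - u_a - \cdots - u_b - q$ is an induced cycle of length $b - a + 3 \geq 4$, contradicting chordality. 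I would then contradict the minimality of $\psi(K)$ by passing to the clique $K' = \{u\} \cup Q$ and examining the partition distance it induces: because $u$ is absorbed into $K'$ and the vertices of $Q$ now lie in $A_0^{K'}$, the connected component of $G[A_1^{K'}]$ containing $w$ together with its admissible downward set gives a valid pair $(W',F')$ with $|W' \cup F'| < |F \cup W|$, yielding $\psi(K') < \psi(K)$.

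With $u$ now universal in $W$, to show $W \setminus \{u\}$ is independent, any edge $w_1 w_2 \in E$ inside $W \setminus \{u\}$ would give a triangle $\{u, w_1, w_2\}$, and again passing to a partition distance whose level~$0$ clique contains this triangle (together with a suitable extension inside $Q$) would extract a smaller valid configuration and contradict minimality. Finally, independence of $N(u) \cap (F \cup W)$ reduces to ruling out an edge $fw$ with $f \in N(u) \cap F$ and $w \in W \setminus \{u\}$, since $F$ is already independent and $N(u) \cap W = W \setminus \{u\}$ is independent by the previous step; such an edge combined with another vertex $w' \in W \setminus \{u,w\}$ would produce the induced $4$-cycle $u - f - w - w' - u$ (once one checks $w'f \notin E$, which follows from the structure already established), and the remaining case where no such $w'$ exists collapses to a tiny configuration that directly contradicts minimality. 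The main obstacle throughout is this minimality argument: the delicate point is to choose the alternate clique $K'$ so that the resulting partition distance produces a candidate pair $(W', F')$ satisfying all three structural requirements --- connectedness of $W'$ within its level, independence or emptiness of $F'$, and $N(F') \cap A_{i'+2}^{K'} = \emptyset$ --- while $|W' \cup F'| < |F \cup W|$, and to do this separately for each of the three parts of the claim.
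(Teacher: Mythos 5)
Your core mechanism --- passing to the clique $K'=Q\cup\{u\}$ and invoking the minimality of $\psi$ over all cliques --- is exactly the paper's, and your first part is essentially sound, though the chordality lemma there is superfluous: every vertex of $W\setminus\{u\}$ already lies in $A_1'$ because its neighbor in $A_{i-1}$ belongs to $Q\subseteq K'$, so the whole induced path $u_1-\cdots-u_\ell$ sits inside one component of $G[A_1']$ without any help from $q$. Note also that the paper runs a \emph{single} minimality argument that kills both independence statements at once, and then obtains $W\subseteq N[u]$ for free: once $W\setminus\{u\}$ is independent, connectivity of $G[W]$ forces every other vertex of $W$ to be adjacent to $u$. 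Proving the domination statement first, as you do, is the reverse order and costs extra work.

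The genuine gaps are in your second and third parts. In the second part you absorb the offending edge $w_1w_2$ into the level-$0$ clique. This destroys the witness: once $w_1,w_2$ sit at level $0$, there is no guarantee that any connected component of the new $G[A_1'']$ with at least two vertices survives inside $(F\cup W)\setminus\{u,w_1,w_2\}$ (for instance if $W=\{u,w_1,w_2\}$, what remains of $F\cup W$ at level $1$ or $2$ is a subset of the independent set $F$, so every component there is a singleton); then $\psi(K'')$ may be realized by an entirely unrelated part of the graph and no comparison with $\vert F\cup W\vert$ is possible. You also never verify that $\{u,w_1,w_2\}$ together with a ``suitable extension inside $Q$'' is a clique. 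The correct move is the paper's: keep $w_1,w_2$ at level $1$ with respect to $K'=Q\cup\{u\}$, where the edge $w_1w_2$ itself certifies a component of size at least two contained in $(F\cup W)\setminus\{u\}$, hence $\psi(K')\leq\vert F\cup W\vert-1$. In the third part the alleged induced $4$-cycle $u-f-w-w'-u$ is not a cycle at all: it requires the edge $ww'$ with $w,w'\in W\setminus\{u\}$, which you have just shown to be a non-edge; moreover $uw\in E$ (since $W\subseteq N[u]$), so the edge $fw$ only creates a triangle, which chordality does not forbid. An edge between $F\cap N(u)$ and $W\setminus\{u\}$ must instead be eliminated by the same minimality argument, using that both endpoints lie in $A_1'$ because both are adjacent to $u\in K'$.
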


   For contradiction, suppose that $W\setminus \{u\}$ or $N(u)\cap (F\cup W)$ is not an independent set of $G$. Let $K'=G[Q\cup \{u\}]$. Note that $Q$ is empty when $W=A_0$. From Claim \ref{c1} $Q$ is a clique and it follows that $K'$ is also a clique. Let $A_0',A_1',\ldots,A_{k'}'$ be the partition distance with respect to $K'$. Hence $A_0'=K'$. Since $W\setminus \{u\}$ or $N(u)\cap (F\cup W)$ are not an independent set, there is $W'\subseteq A_1'\cap (F\cup W)$ such that $W'$ is a connected component of $G[A_1']$ with at least two vertices. Let $F'=N(W')\cap A_2'$. Note that $F'\subseteq F$. Therefore either $F'=\emptyset$ or $F'$ is an independent set of $G$, and $N(F')\cap A_3'=\emptyset$. Then $\vert F'\cup W'\vert \leq \vert F\cup W\vert -1$ and thus $\psi(K)$ is not minimum, a contradiction. Hence $W\setminus \{u\}$ and $N(u)\cap (F\cup W)$ are two independent sets of $G$. Since $G[W]$ is connected, it follows that $W\subseteq N[u]$. This proves Claim \ref{c2}.

   \begin{claim}\label{c3}
      There exists $u\in W$ such that $Q = N(u)\cap Q$.
   \end{claim}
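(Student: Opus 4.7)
The plan is to proceed by contradiction: assuming no $u \in W$ satisfies $Q \subseteq N(u)$, I will exhibit an induced cycle of length at least four in $G$, contradicting chordality. I first dispose of the trivial case $W = V(K)$ (where $Q = \emptyset$, so the conclusion is vacuous). Otherwise $Q \neq \emptyset$, and since every vertex of $W$ has at least one neighbor in $Q$ (because $Q = N(W) \cap A_{i-1}$), I pick $u \in W$ maximizing $|N(u) \cap Q|$ and set $Q_u = N(u) \cap Q$; the contradiction hypothesis gives $\emptyset \neq Q_u \subsetneq Q$. Choosing $q^* \in Q \setminus Q_u$, some vertex of $W$ is adjacent to $q^*$, so I take $u^* \in W \cap N(q^*)$ minimizing $d_{G[W]}(u, u^*)$ and let $P : u = v_0, v_1, \ldots, v_m = u^*$ be a shortest path in $G[W]$. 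The choice of $u^*$ forces $q^* \notin N(v_j)$ for $j < m$, and since $|N(u^*) \cap Q| \leq |Q_u|$ by the maximality of $u$ while $q^* \in N(u^*) \cap Q \setminus Q_u$, there must exist $q_0 \in Q_u$ with $q_0 \notin N(u^*)$.

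The core step combines $P$ with the edge $q_0 q^* \in E(G)$, which is present since Claim~\ref{c1} tells us $Q$ is a clique. Setting $j^* = \max\{j : q_0 v_j \in E\}$, which is well defined in $\{0,\ldots,m-1\}$ because $q_0 v_0 \in E$ and $q_0 v_m \notin E$, I consider the cycle
\[
C : q_0 - v_{j^*} - v_{j^*+1} - \cdots - v_m - q^* - q_0,
\]
whose length is $m - j^* + 3 \geq 4$. Chordlessness reduces to excluding three families of potential chords: edges $v_i v_j$ with $|i - j| \geq 2$, ruled out because $P$ is a shortest path; edges $q_0 v_j$ with $j > j^*$, ruled out by the maximality of $j^*$; and edges $q^* v_j$ with $j^* \leq j \leq m-1$, ruled out because $u^*$ was chosen to minimize the $G[W]$-distance to $u$ among $W \cap N(q^*)$. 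Hence $C$ is a hole, contradicting chordality.

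The place where I expect the most care is the third family of chords: the exclusion of $q^* v_j$ for $j < m$ depends entirely on choosing $u^*$ at minimum $G[W]$-distance from $u$, and without that minimization a chord could survive and collapse $C$ into a smaller configuration. Once the setup is correct, even the shortest instance $m - j^* = 1$ already produces an induced $C_4$, and longer configurations produce longer holes, so no further case split on the length of $C$ is required.
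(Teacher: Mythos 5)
Your proof is correct and follows essentially the same route as the paper's: choose $u$ maximizing $|N(u)\cap Q|$, take a closest vertex $u^*$ of $W$ adjacent to some $q^*\in Q\setminus N(u)$, extract $q_0\in N(u)\cap Q\setminus N(u^*)$ from the maximality, and close a hole through the clique edge $q_0q^*$. Your definition of $j^*$ as the last neighbor of $q_0$ on the path merely streamlines the paper's two-step case analysis (whether or not $q$ has an internal neighbor on $P$) into a single induced cycle; no substantive difference.
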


   For contradiction, suppose that for every vertex $u\in W$, we have $Q\neq N(u)\cap Q$ i.e.\ $Q\not\subseteq N(u)$. Let $u\in W$ such that $\vert N(u)\cap Q\vert$ is maximal. Since every vertex of $Q$ has a neighbor in $W$, there is $u'\in W$ such that $q'u'\in E$ and $q'u\not\in E$, where $q'\in Q$. We choose $u'$ so that $d(u,u')$ is minimal. From the maximality of $\vert N(u)\cap Q\vert$, there is $q\in Q$ such that $qu\in E$ and $qu'\not\in E$. Since $G[W]$ is connected, there is a shortest path $P=u-\cdots-u'$ between $u$ and $u'$ in $G[W]$. If $P=u-u'$, then $C_4=q-q'-u'-u-q$ is an induced cycle of length four, a contradiction. Let $v\in V(P)\setminus \{u,u'\}$. Suppose that $q'v\in E$. From the minimality of $d(u,u')$, it follows that $N(u)\cap Q\subseteq N(v)\cap Q$. Then $\vert N(v)\cap Q\vert > \vert N(u)\cap Q\vert$ is a contradiction of the maximality of $\vert N(u)\cap Q\vert$. Hence for every $v\in V(P)\setminus \{u,u'\}$, we have $vq'\not\in E$. Therefore if no vertex of $V(P)\setminus \{u,u'\}$ is a neighbor of q, it follows that $G[V(P)\cup \{q,q'\}]$ is an induced cycle of length at least five, a contradiction. So there is $v\in V(P)\setminus \{u,u'\}$ such that $qv\in E$. We choose $v$ such that $d(u',v)$ is minimum. Let $P'=v-\cdots-u'$ be a shortest path between $u'$ and $v$. Then $G[V(P')\cup \{q,q'\}]$ is an induced cycle of length at least four, a contradiction. This proves Claim \ref{c3}.

   \begin{claim}\label{c4}
      For every $u\in W$, $\vert N(u)\cap F\vert \leq 1$, and for every $v\in F$, $d(v)=1$.
   \end{claim}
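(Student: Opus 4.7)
I would prove Part 2 first and then read Part 1 off as an immediate consequence of Part 2 combined with Theorem \ref{ubound}.

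For Part 2, I would suppose for contradiction that some $v \in F$ has $d(v) \geq 2$. Pick any neighbor $u \in W$ of $v$ (one exists since $v \in N(W)$) and a second neighbor $v^*$. Because $F$ is independent and $N(F) \cap A_{i+2} = \emptyset$, the vertex $v^*$ must lie in $W \cup (A_i \setminus W) \cup (A_{i+1} \setminus F)$. In the first subcase $v^* \in W$, I would apply Claim \ref{c2} to the vertex $u_0 \in W$ furnished by Claim \ref{c3}: using $W \subseteq N[u_0]$, the independence of $W \setminus \{u_0\}$, and the independence of $N(u_0) \cap (F \cup W)$, one checks that either the edge $vv^*$ or $vu$ lies inside the independent set $N(u_0)\cap(F\cup W)$ (contradicting Claim \ref{c2}), or the vertices $u, v, v^*, u_0$ induce a $C_4$ (contradicting chordality).

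In the remaining two subcases, $v^*$ lies outside $W$ and I can locate a vertex $w \in A_i \setminus W$: either $w := v^*$ itself (if $v^* \in A_i \setminus W$), or $w$ is any neighbor of $v^*$ in $A_i$ (if $v^* \in A_{i+1} \setminus F$, such a neighbor exists since $v^* \in A_{i+1}$, and it cannot lie in $W$ for otherwise $v^* \in N(W) \cap A_{i+1} = F$). Since $u$ and $w$ sit in different components of $G[A_i]$, we have $uw \notin E$. I would then take an induced path $P$ from $u$ to $w$ inside $G[A_0 \cup \cdots \cup A_{i-1} \cup \{u,w\}]$, which exists because both $u$ and $w$ reach the clique $A_0 = V(K)$ in $i$ steps. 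Closing $P$ through $v$ (and $v^*$ when $vw \notin E$) yields a cycle of length at least four. The key point is that $v, v^* \in A_{i+1}$ have no neighbors in $A_0 \cup \cdots \cup A_{i-1}$, so the only potential chords lie among $\{u, v, v^*, w\}$; every such chord has already been excluded ($uv^* \notin E$ since $v^* \notin N(W)$; $uw \notin E$ by components; and in the long-cycle case $vw \notin E$ by hypothesis). This induced cycle of length at least four contradicts chordality.

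Given Part 2, Part 1 is immediate: if some $u \in W$ had two distinct neighbors $v_1, v_2 \in N(u) \cap F$, then $v_1 v_2 \notin E$ by independence of $F$, so $d(v_1,v_2) = 2$ via $u$. Part 2 supplies $d(v_1) = d(v_2) = 1$, and Theorem \ref{ubound} then gives $b(G) \leq d(v_1)+d(v_2)-1 = 1$. Since $G$ is connected on at least two vertices we have $\omega(G) \geq 2$, contradicting the standing assumption $b(G) > \omega(G)$. The main obstacle I anticipate is the bookkeeping in Part 2 Case B, where one must correctly identify the witness $w \in A_i \setminus W$ in both subcases and verify that closing the induced $u$-$w$ path through $v$ (and possibly $v^*$) really produces a hole, with every candidate chord ruled out by the distance stratification.
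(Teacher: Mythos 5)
Your proof is correct and follows essentially the same route as the paper's: establish $d(v)=1$ for every $v\in F$ via the independence properties of Claim \ref{c2} applied to the vertex supplied by Claim \ref{c3}, then deduce $\vert N(u)\cap F\vert\leq 1$ from Theorem \ref{ubound} and $b(G)>\omega(G)\geq 2$. The only difference is that your Case B re-derives by an explicit hole construction what the paper obtains directly from Claim \ref{c1} (namely that $N(v)\subseteq W$, since $N(v)\cap A_i$ lies in a clique meeting the component $W$ and $v$ can have no neighbour in $A_{i+1}\setminus F$ or $A_{i+2}$).
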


   For contradiction, suppose there exists $u\in W$ such that $v,v'\in N(u)\cap F$. From Claim \ref{c3} there is $w\in W$ such that $Q=N(w)\cap Q$. From Claim \ref{c2} $W=N[w]\cap W$, and $W\setminus \{w\}$, $(F\cup W)\cap N(w)$ are two independent sets of $G$. From Claim \ref{c1} $N(v)\cap A_i$, $N(v')\cap A_i$ are two cliques and therefore $N(v)\subseteq W$ and $N(v')\subseteq W$. If $d(v)\geq 2$ or $d(v')\geq 2$, then $(F\cup W)\cap N(w)$ is not an independent set. Hence $d(v), d(v')\leq 1$. Yet from Theorem \ref{ubound} it follows that $b(G)\leq d(v) + d(v') - 1\leq 1$, a contradiction. This proves Claim \ref{c4}.

   \begin{claim}\label{c5}
      $\vert Q\vert \leq \omega(G) - 1$
   \end{claim}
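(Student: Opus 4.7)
The plan is very short because two of the already-established claims combine directly to give the bound. First I would recall that by Claim \ref{c1}, the set $Q=N(W)\cap A_{i-1}$ induces a clique of $G$ (the claim applies since $W$ is a connected component of $G[A_i]$ when $W\neq V(K)$; and if $W=V(K)$ then $Q=\emptyset$ and the statement is trivial). Second, I would invoke Claim \ref{c3} to pick a vertex $u\in W$ satisfying $Q=N(u)\cap Q$, i.e. $Q\subseteq N(u)$.

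Putting these together, the set $Q\cup\{u\}$ induces a clique of $G$: every pair of vertices inside $Q$ is adjacent by Claim \ref{c1}, and $u$ is adjacent to every vertex of $Q$ by the choice given in Claim \ref{c3}. Since $\omega(G)$ is the size of a maximum clique in $G$, we must have $|Q|+1=|Q\cup\{u\}|\leq \omega(G)$, and hence $|Q|\leq \omega(G)-1$, as desired.

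There is no real obstacle here; the only minor point to be careful about is the degenerate case $W=V(K)$ (in which $Q=\emptyset$ by definition and the inequality holds trivially), so it suffices to treat the case $W\neq V(K)$, in which Claims \ref{c1} and \ref{c3} are both applicable.
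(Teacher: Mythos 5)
Your proof is correct and matches the paper's argument exactly: Claim \ref{c1} makes $Q$ a clique, Claim \ref{c3} supplies a vertex $u\in W$ adjacent to all of $Q$, so $Q\cup\{u\}$ is a clique of size $|Q|+1\leq\omega(G)$. The extra remark about the degenerate case $W=V(K)$ is harmless and does not change the argument.
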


   From Claim \ref{c1} $Q$ is a clique and from Claim \ref{c3} there is $u\in W$ such that $Q=N(u)\cap Q$. Hence $Q\cup \{u\}$ is a clique and therefore $\vert Q\vert \leq \omega(G) - 1$. This proves Claim \ref{c5}. \\

   From Claim \ref{c3} there is $u\in W$ such that $Q = N(u)\cap Q$. Recall that $\vert W\vert \geq 2$ and that $G[W]$ is a connected. Suppose that there is $v\in W$, $u\neq v$, such that $Q = N(v)\cap Q$. From Claim \ref{c2} $W\setminus \{u\}$ and $W\setminus \{v\}$ are two independent sets of $G$. Thus $W=\{u,v\}$. From Claim \ref{c1} $Q$ is a clique, and therefore $\vert Q\vert \leq \omega(G)-2$. From Claim \ref{c4} $\vert N(u)\cap F\vert, \vert N(v)\cap F\vert\leq 1$. Hence $d(u) \leq \vert Q\cup W\setminus \{u\}\vert + 1\leq \omega(G)$ and $d(v)\leq \vert Q\cup W\setminus \{v\}\vert + 1\leq \omega(G)$. Suppose that $u$ has a neighbor $x\in F$. It follows from Claim \ref{c4} that $d(x)=1$. Thus from Theorem \ref{ubound} $b(G)\leq d(u) + d(x) - 1\leq \omega(G)$, a contradiction. Hence $N(u)\cap F, N(v)\cap F=\emptyset$. Therefore $d(u)=d(v)= \omega(G)-1$. From Theorem \ref{uboundneigh} it follows that $b(G)\leq d(u) + d(v) - 1 - \vert N(u)\cap N(v)\vert \leq \omega(G)$, a contradiction. \\

   So we can assume that $u$ is the only vertex in $W$ such that $Q = N(u)\cap Q$. We show that $F$ is empty. Recall that from Claim \ref{c1} $G[Q]$ is a clique, from Claim \ref{c5} $\vert Q\vert \leq \omega(G) - 1$, and from Claim \ref{c4} every vertex of $W$ has at most one neighbor in $F$. Moreover from Claim \ref{c2} $W=N[u]$ and $(F\cup W)\setminus \{u\}$ is an independent set of $G$. Hence for every $v\in W\setminus \{u\}$, we have $d(v)\leq \vert Q\vert + 1\leq \omega(G)$. Let $x\in F$. From Claim \ref{c4} $d(x)=1$. If there is $v\in W\setminus \{u\}$ a neighbor of $x$, then from Theorem \ref{ubound} it follows that $b(G)\leq d(v) + d(x) - 1\leq \omega(G)$, a contradiction. Hence $x$ is a neighbor of $u$. Yet for every $v\in W\setminus \{u\}$, we have $d(v,x)\leq 2$. Therefore from Theorem \ref{ubound} it follows that $b(G)\leq d(v) + d(x) - 1\leq \omega(G)$, a contradiction. Hence $F=\emptyset$. It follows that for every $v\in W\setminus \{u\}$, we have $d(v)\leq \vert Q\vert \leq \omega(G)-1$. \\

   Let $S$ be a minimum dominating set of $G$. Suppose that $\vert S\cap W\vert \geq 2$. Then $(S\setminus W)\cup \{u\}$ is a dominating set, a contradiction. Hence for every minimum dominating set of $G$, we have $\vert S\cap W\vert \leq 1$. Let $v\in W\setminus \{u\}$ and $E_v=\{vv'\in E \mid v'\in N(v)\}$. Recall that $d(v)\leq \omega(G) - 1$, and therefore $\vert E_v\vert \leq \omega(G) - 1$. Let $w\in W\setminus \{v\}$ ($u=w$ is possible). Let $E_w=\{qw\in E \mid q\in (N(w)\cap Q)\setminus N(v)\}$, that is, the edges incident to $w$ with an extremity in $Q$ that is not a neighbor of $v$. Note that $\vert E_w\vert \leq \vert Q\setminus N(v)\vert$, and therefore $\vert E_v\cup E_w \vert \leq \vert Q\vert + 1\leq \omega(G)$. We remove the edges $E_v\cup E_w$ from $G$ to construct $G'=(V,E - (E_v\cup E_w))$. Since $b(G)>\omega(G)$, it follows that $\gamma(G') = \gamma(G)$. Let $S'$ be a minimum dominating set of $G'$. Since $G'$ is the graph $G$ minus some edges, any dominating set of $G'$ is a dominating set of $G$. Hence $S'$ is a minimum dominating set of $G$. Therefore from previous arguments, we have $\vert S'\cap W\vert \leq 1$. Note that $v$ is isolated in $G'$, and thus $v\in S'$. If $S'\cap N_G(v)\neq \emptyset$, then $S'\setminus \{v\}$ is a dominating set of $G$, a contradiction. Hence $S'\cap N_G(v)=\emptyset$. Recall that $N_{G'}(w)\cap Q\subseteq N_G(v)\cap Q$. Hence $N_{G'}(w)\cap S'\cap W\neq \emptyset$. Yet it follows that $\vert S'\cap W\vert \geq 2$, a contradiction.

   Hence $\gamma(G')>\gamma(G)$. Since we removed at most $\omega(G)$ edges from $G$ to construct $G'$, it follows that $b(G)\leq \omega(G)$. This completes the proof.
\end{proof}

We show that the bound of Theorem \ref{BondChordal} is sharp. The corona $G_1 \circ G_2$ (introduced by Frucht and Harary in \cite{Frucht}) is the graph formed from $\vert V(G_1)\vert$ copies of $G_2$ by joining the ith vertex of $G_1$ to the ith copy of $G_2$. Let $G=K_n \circ K_1$. Note that $\omega(G)=\Delta(G)=n$. Carlson and Develin in \cite{Carlson} have shown that  $\gamma(G) =\omega(G)$ and that $b(G)=\omega(G)$.

For non-chordal graphs, we show that there is an infinite family of graphs $\mathcal{C}$, where for every $G\in \mathcal{C}$, we have $b(G)>\omega(G)$, and its longest induced cycle has length four. The \textit{cartesian product} $G\,\square \, H$ of two graphs $G$ and $H$ is the graph whose vertex set is $V(G)\times V(H)$. Two vertices $(g_1,h_1)$ and $(g_2,h_2)$ are adjacent in $G\,\square\, H$ if either $g_1=g_2$ and $h_1h_2$ is an edge in $H$ or $h_1=h_2$ and $g_1g_2$ is an edge in $G$. Consider $G=(P_2\, \square\, P_k) \circ K_1$, where $k\geq 2$. The longest cycle of $G$ is four and $\omega(G)=2$. Then one can easily check that $\gamma(G)=2k$ and that $b(G)=3=\omega(G)+1$. We remark that it would be of interest to know if there exists a graph $G$ for which the longest cycle is $C_4$, and such that $b(G)>\omega(G)+1$. Graphs for which the longest cycle is $C_4$ may be known as the class of \textit{quadrangulated} graphs (an extension of chordal graphs, that is, chordal graphs where $C_4$ are allowed). \\

Since for a planar graph $G$, we have $\omega(G)\leq 4$, we obtain the following bound:

\begin{coro}\label{BondPlanar}
   Let $G$ be a planar chordal graph. When $G$ is not a clique, then $b(G)\leq 4$. If $G=K_2$, then $b(G)=1$. If $G=K_3$ or $G=K_4$, then $b(G)=2$.
\end{coro}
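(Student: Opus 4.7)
The plan is to derive the corollary directly from Theorem \ref{BondChordal} using the classical planarity fact that $K_5$ is not planar, hence $\omega(G)\leq 4$ for any planar graph $G$. There is no real obstacle here; the corollary is essentially a specialization of the main theorem to the planar case, so the work reduces to a short case analysis on the small cliques.

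First I would observe that since $G$ is planar, it contains no $K_5$ subgraph, and so $\omega(G)\leq 4$. Splitting on whether $G$ is a clique: if $G$ is not a clique, then Theorem \ref{BondChordal} gives $b(G)\leq \omega(G)\leq 4$, which is exactly the first claim. If $G$ is a clique, then planarity forces $G\in\{K_1,K_2,K_3,K_4\}$, and Theorem \ref{BondChordal} gives $b(G)=\lceil \omega(G)/2\rceil$.

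It then remains to compute $\lceil \omega(G)/2\rceil$ in each of the remaining cases. For $G=K_2$ we get $b(G)=\lceil 2/2\rceil=1$. For $G=K_3$ we get $b(G)=\lceil 3/2\rceil=2$. For $G=K_4$ we get $b(G)=\lceil 4/2\rceil=2$. (The case $G=K_1$ is excluded implicitly since $b(G)$ is only defined when $G$ has at least one edge.) These three computations match the three equalities stated in the corollary, completing the proof.
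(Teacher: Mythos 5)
Your proof is correct and follows exactly the route the paper intends: planarity gives $\omega(G)\leq 4$, and the corollary then falls out of Theorem \ref{BondChordal} by splitting on whether $G$ is a clique and computing $\lceil \omega(G)/2\rceil$ for $K_2$, $K_3$, $K_4$. Nothing further is needed.
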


We remark that Corollary \ref{BondPlanar} may be of used to tackle the following conjecture of Dunbar et al.\ on the bondage number of planar graphs (see Chapter 17 p. 475, Conjecture 17.10 of \cite{Dunbar}).

\begin{conj}{\cite{Dunbar}}
   If $G$ is a planar graph, then $b(G)\leq \Delta(G) + 1$.
\end{conj}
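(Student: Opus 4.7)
The plan is to reduce the claim to the two local estimates already in hand: Theorem \ref{ubound} gives $b(G)\leq d(u)+d(v)-1$ whenever $d(u,v)\leq 2$, and Theorem \ref{uboundneigh} improves this on edges by subtracting $|N(u)\cap N(v)|$. Thus it suffices to find, in every planar graph $G$, either a pair $u,v$ at distance at most $2$ with $d(u)+d(v)\leq \Delta(G)+2$, or an edge $uv$ with $d(u)+d(v)-|N(u)\cap N(v)|\leq \Delta(G)+2$. Planarity is the hammer that should force such a pair to exist.

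First I would handle the small cases directly: if $\Delta(G)\leq 2$ then $G$ is a disjoint union of paths and cycles, for which the bondage number is easily seen to satisfy the bound. So assume $\Delta(G)\geq 3$ and take a connected minimum counterexample $G$. By Euler's formula, $G$ has a vertex $v_0$ with $d(v_0)\leq 5$, and in fact the average degree is strictly less than $6$. The first subgoal is to locate near $v_0$ a neighbor (or a distance-$2$ companion) $u$ satisfying $d(u)+d(v_0)-|N(u)\cap N(v_0)|\leq \Delta(G)+2$. Ruling out such a neighbor becomes the combinatorial target: any minimum counterexample must avoid this "good" configuration around every low-degree vertex.

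To push this through I would set up a discharging argument on the planar embedding. Give each vertex $v$ initial charge $d(v)-6$ and each face $f$ charge $2|f|-6$, so the total charge is $-12$ by Euler's formula. I would then design redistribution rules that move charge away from short faces and low-degree vertices toward the high-degree vertices obstructing the target inequality, and check that, unless a good pair exists, every vertex and face ends with nonnegative charge, contradicting the total of $-12$. The real obstacle is that this is exactly the long-standing open conjecture of Dunbar et al.; the best published bounds are only $b(G)\leq \Delta(G)+2$ and $b(G)\leq 8$, and the stubborn configurations are precisely those where a degree-$5$ vertex is surrounded by vertices of degree $\Delta(G)$ whose pairwise common neighborhoods already saturate the planarity constraint. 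Closing the remaining $+1$ gap seems to need either a refinement of Theorem \ref{uboundneigh} that tracks triangles inside $N(u)\cup N(v)$, or a genuinely global comparison of dominating sets across the edge deletion rather than the purely local degree-sum bounds at our disposal. I expect the reducible-configuration analysis at that last step, in the style of planar-coloring arguments, to consume the bulk of the effort and to be where the proof either succeeds or breaks down.
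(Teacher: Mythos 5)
There is a genuine gap here, and in fact the gap is the entire proof. The statement you are asked about is stated in the paper as a \emph{conjecture} of Dunbar et al.\ --- the paper offers no proof of it, only the remark that Corollary \ref{BondPlanar} might be of use in attacking it. Your proposal is an outline of a strategy, not a proof, and you concede as much yourself: you write that the obstruction is ``exactly the long-standing open conjecture'' and that the best published bounds are $b(G)\leq \Delta(G)+2$ and $b(G)\leq 8$. A proof attempt that ends by observing that the remaining step is the open problem has not proved anything.

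Concretely, the missing pieces are these. First, the reduction you propose --- find $u,v$ at distance at most $2$ with $d(u)+d(v)\leq \Delta(G)+2$, or an edge $uv$ with $d(u)+d(v)-|N(u)\cap N(v)|\leq \Delta(G)+2$ --- is not known to be achievable in every planar graph; indeed the configurations you describe (a degree-$5$ vertex all of whose neighbors have degree $\Delta(G)$ and whose common neighborhoods are already maximal under planarity) are precisely the ones where Theorems \ref{ubound} and \ref{uboundneigh} only yield $\Delta(G)+2$. Second, your discharging argument is never executed: the initial charges $d(v)-6$ and $2|f|-6$ summing to $-12$ are a correct setup, but you specify no redistribution rules, exhibit no reducible configurations, and verify no final charges, so there is nothing to check. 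Either a strengthening of Theorem \ref{uboundneigh} or a genuinely non-local argument would be needed, and neither is supplied. The proposal should be regarded as a (reasonable) research plan for an open problem, not as a proof of the statement.
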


We leave the following problem: \\

\noindent
\textbf{Problem : } Characterize the chordal graphs for which $b(G)=\omega(G)$.

\begin{ack}
   The author would like to thank Christophe Picouleau, St\'ephane Rovedakis and Fran\c cois  Delbot for providing helpful comments.
\end{ack}

\end{document}